\newtheorem{theorem}{Theorem}
\newtheorem{lemma}{Lemma}
\newtheorem{proposition}{Proposition}
\newtheorem{definition}{Definition}
\newtheorem{corollary}{Corollary}
\def\squareforqed{\hbox{\rlap{$\sqcap$}$\sqcup$}}
\def\qed{\ifmmode\squareforqed\else{\unskip\nobreak\hfil
\penalty50\hskip1em\null\nobreak\hfil\squareforqed
\parfillskip=0pt\finalhyphendemerits=0\endgraf}\fi}
\newenvironment{proof}{{\sc Proof:}}{\qed}
\newcommand{\CAT}[1]{\mathbf{#1}}
\newcommand{\SET}{\CAT{Set}}
\newcommand{\CHAUS}{\CAT{CHaus}}
\newcommand{\LIMIT}{\varprojlim}
\newcommand{\COLIMIT}{\varinjlim}
\title{Graphs, Ultrafilters and Colouring}
\author{Felix Dilke\footnote{fdilke@gmail.com}}
\date{}
\begin{document}

\bibliographystyle{plain}
\maketitle

\begin{abstract}
\noindent
Let $\beta$ be the functor from $\SET \rightarrow \CHAUS$ which maps
each discrete set $X$ to its Stone-$\check{\mbox{C}}$ech compactification, the
set $\beta X$ of ultrafilters on $X$.  Every graph $G$ with vertex set
$V$ naturally gives rise to a graph $\beta G$ on the set $\beta V$ of
ultrafilters on $V$.  In what follows, we interrelate the properties of
$G$ and $\beta G$.  Perhaps the most striking result is that $G$ can
be finitely coloured iff $\beta G$ has no loops.
\end{abstract}

\section{Introduction}

\subsection{Notation}
By a \emph{graph} $G$ we mean two parallel arrows $s, t : E \rightarrow V$ in
$\SET$. We think of $V$ as the \emph{vertex set}, $E$ as the \emph{edge set}, and $s(e), t(e)$ as the
\emph{source} and \emph{target} vertices of the edge $e \in E$. 

In particular, each edge is directed, and $G$ may have multiple edges, so this
does not correspond exactly to the usual notion of graph. As will become clear later,
this distinction could be avoided by defining graphs in a slightly more complex way, and
in any case does not matter for any of the results we derive. 

When we want to emphasize this way of defining a graph, we shall call such an arrow-pair a 
\emph{categorical digraph}.

Regarded as an endofunctor of $\SET$, 
$\beta$ clearly preserves the shape of set diagrams, so it maps graphs to graphs. 
Explicitly, given a graph $G = \langle s,t: E \rightarrow V \rangle$, 
we write $\beta G$ for the arrow-pair 
$\langle \beta s, \beta t : \beta E \rightarrow \beta V \rangle$ which is again a graph,
with $\beta E$ and $\beta V$ regarded as ordinary sets.

\subsection{Categorical interpretation of graph properties}

With this formulation, many familiar properties of graphs can be expressed easily in categorical terms,
and some of them turn out to be dual to each other. For example,
\begin{itemize}
\item $G$ has no loops $\iff \LIMIT G = \emptyset$

\item $G$ is weakly connected $\iff \COLIMIT G = 1$,
\end{itemize}
Here the limit is an equalizer, the colimit is a coequalizer, and $0 = \LIMIT \emptyset$ and $1 = \COLIMIT \emptyset$ 
are the initial and terminal objects of $\SET$; so these just say that $G$ has the same limit (resp. colimit) as the empty diagram. 
In particular, weak connectedness of graphs is a dual notion to that of being loop-free.

A \emph{colouring} of $G$ is a map $c:V \rightarrow C$ such that the composite graph 
$cG = \langle cs, ct : E \rightarrow C \rangle$ has no loops, i.e. $\LIMIT cG = \emptyset$. 
(If we require $c$ to be epi, this is dual to the concept of a \emph{spanning set of edges} of $G$.)

We can also construct two new graphs from G: its pullback, which is just its edge graph, and its pushout (less easy to describe).

G has no multiple edges iff $s \times t : E \rightarrow V \times V$ is mono. 
Dually, the coproduct map $s + t : E + E \rightarrow V$ is epi iff G has no isolated points.

\subsection{$\beta G$ is an extension of $G$}
Since $\beta$ is a monad on $\SET$, there are canonical embeddings $\eta(V): V \rightarrow \beta V$ 
and $\eta(E): E \rightarrow \beta E$, which make the following diagrams commute:
\[
\begin{array}{lcl}
\xymatrix
{
	E \ar[r]^{\eta(E)} \ar[d]_{s} & \beta E \ar[d]^{\beta s} \\
	V \ar[r]_{\eta(V)} & \beta V
}
	&\quad&
\xymatrix
{
	E \ar[r]^{\eta(E)} \ar[d]_{t} & \beta E \ar[d]^{\beta t} \\
	V \ar[r]_{\eta(V)} & \beta V
}
\end{array}
\]
The horizontal maps are injective, so $G$ can be embedded as a subgraph: $G \leqslant \beta G$. 
When $G$ is finite, $\beta G=G$; otherwise it is a proper extension.\\
From the general theory of monads (cf.~\cite{adamekj:absconcat}), $\beta E$ and $\beta V$ have the structure of (free) $\beta$-algebras, that is, compact
Hausdorff spaces (cf.~\cite{johnstonep:stospa}), and $\beta G$ can be regarded as a ``topological graph'', whose incidence functions $\beta s, \beta t$ 
act continuously from edges to vertices.

\subsection{More general diagrams}

More generally, for any small category $C$, the functor category $\hat C = \SET^{C^{op}}$ inherits any monad T defined on $\SET$. 
The above is just the special case where $T = \beta$ and $C$ is the two-object category $II = \langle \bullet \rightrightarrows \bullet \rangle$. \\
We can therefore regard $\hat{II}$ as the category of graphs; in particular, since $\hat C$ is a topos (cf.~\cite{moerdijki:shegeo}) there is a natural
concept of exponentiation of graphs. (As shown in \cite{moerdijki:shegeo}, $\hat C$ can be regarded as a cocompletion of $C$, 
which may be an interesting way to look at the theory of graphs in the case $C = II$.) 

To make each $G \in \hat C$ strictly a graph rather than a digraph, we could enforce symmetry of edges by instead setting $C = II'$, where $II'$ is
an enlarged category, exactly like $II$ but with another morphism added to interchange $s$ and $t$. Explicitly, if $s$ and $t$ are the two 
non-identity arrows in $II$, we add an endomorphism $h$ of the second object such that $hs = t, h^{2}=1$. To eliminate loops, one could
go further, introducing a category $II''$ with another morphism from the second object to the first, picking out a loop at each vertex: see
\cite{lawverew:conmat} with its concept of ``irreflexive graph''.
But for our purposes, all these constructions would only complicate the proofs below without adding to their content; we leave $C = II$. \\

We return to discussion of the more general case later.

\subsection{Ultrafilter constructions}
Our arguments will often hinge on whether or not an ultrafilter of a certain type exists. The following basic property of ultrafilters is useful:
\begin{lemma}
For any family of sets $\Theta$, the following are equivalent:
\begin{itemize}

\item There exists an ultrafilter $\xi$ containing all the sets of $\Theta$.

\item $\Theta$ has the finite intersection property: For any sets $A_{i} \in \Theta, 1 \leqslant i \leqslant n$ we have 
$\cap_{i=1}^{n} A_{i} \neq \emptyset$.

\end{itemize}
\end{lemma}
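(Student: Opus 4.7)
The plan is to prove the two implications separately; both are standard, but deserve a careful setup.

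For the forward direction, I would assume an ultrafilter $\xi \supseteq \Theta$ exists and note that any ultrafilter is in particular a filter, hence closed under finite intersections. So given $A_1,\dots,A_n \in \Theta \subseteq \xi$, their intersection lies in $\xi$. Since ultrafilters do not contain the empty set (by definition of a proper filter), $\bigcap_{i=1}^n A_i \neq \emptyset$. This is essentially a one-line observation.

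For the converse, the key idea is to generate an ultrafilter from $\Theta$ via Zorn's lemma. First I would form the family $\Theta^\ast$ consisting of all finite intersections of elements of $\Theta$, and then the upward closure $\mathcal{F}_0 = \{ B \subseteq X : B \supseteq A \text{ for some } A \in \Theta^\ast \}$, where $X$ is any ambient set containing $\bigcup \Theta$. The finite intersection hypothesis ensures $\emptyset \notin \mathcal{F}_0$, so $\mathcal{F}_0$ is a proper filter containing $\Theta$. Next, I would consider the poset $\mathcal{P}$ of all proper filters on $X$ containing $\mathcal{F}_0$, ordered by inclusion; this is nonempty (contains $\mathcal{F}_0$) and every chain has an upper bound (its union, which remains a proper filter because no stage adjoins $\emptyset$). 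Zorn's lemma then yields a maximal element $\xi \in \mathcal{P}$.

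The one point that requires a moment's care is verifying that maximality of $\xi$ as a proper filter implies $\xi$ is an ultrafilter, i.e.\ that for each $S \subseteq X$ either $S \in \xi$ or $X \setminus S \in \xi$. I would argue by contradiction: if neither held, then no element of $\xi$ would be contained in $S$ and no element contained in $X \setminus S$, so I could extend $\xi$ by adjoining $S$ (generating a strictly larger proper filter), contradicting maximality. This step is the main conceptual content; everything else is bookkeeping. The use of Zorn's lemma (equivalently, the ultrafilter lemma, which is strictly weaker than AC but still nonconstructive) is of course unavoidable.
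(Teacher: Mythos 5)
Your proof is correct; the only thing to note is that the paper does not actually prove this lemma at all --- it simply cites a standard reference (Cohn's \emph{Universal Algebra}), so there is no in-paper argument to compare against. What you have written is the standard proof of the ultrafilter lemma: the forward direction from the filter axioms, and the converse by passing to the filter generated by $\Theta$ and applying Zorn's lemma to the poset of proper filters extending it. Your handling of the one delicate step is sound: from $S \notin \xi$ and $X \setminus S \notin \xi$ and upward closure you correctly deduce that every member of $\xi$ meets $S$, so that $\xi \cup \{S\}$ generates a proper filter strictly larger than $\xi$, contradicting maximality. In short, you have supplied the proof the paper outsources, and done so correctly.
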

\begin{proof}
See \cite{cohnp:unialg}.
\end{proof}

\ \\
\noindent
We shall also constantly use the following facts:
\begin{itemize}
\item If $\{A_{i}: 1 \leqslant i \leqslant n \}$ is a partition of some set $X$, then any $\upsilon \in \beta X$ contains $A_{i}$ for exactly one $i$. 
\item Given $\upsilon \in \beta X$ and $W \subseteq X$, then: $W$ meets every $V \in \upsilon$ iff $W \in \upsilon$.
\end{itemize}
\noindent
Recall the action of the functor $\beta$ on morphisms: If $f:X \rightarrow Y$ in $\SET$,
the map $\beta f: \beta X \rightarrow \beta Y$ is defined by
\begin{eqnarray}
	V \in \beta f(\upsilon) \iff f^{-1}V \in \upsilon \qquad \forall \ V \subseteq Y, \label{eq:ultmorph}
\end{eqnarray}
for any $\upsilon \in \beta X$.
\ \\
\ \\
We use the following notation for principal ultrafilters, i.e. the images of the canonical embedding map $\eta_X:X \rightarrow \beta X$:
\[	[x] = \eta_X(x) = \{ A \subseteq X : x \in A \} \in \beta X, \qquad x \in X.
\]

\section{Small Relations}
We briefly describe this theory, which is roughly comparable to the Baire theory of 'sets of the
first category' in a suitable topological space (cf.~\cite{rudinw:funana}). It is needed to describe a later
criterion for the existence of multiple edges in $\beta G$.
\ \\
\ \\
A relation $\rho \subseteq X \times Y$ is \emph{small} if relative to it, every set is a finite union of rectangles, i.e. if
\[ 
	\forall Q \subseteq \rho: \exists A_{1}, ..., A_{n} \subseteq X, B_{1}, ..., B_{n} \subseteq Y\ \text{with}\ 
	Q = \rho \cap \bigcup (A_{i} \times B_{i}).
\]		
Smallness is an absolute property of the relation, and does not depend on the rectangle in which it is embedded:
\begin{lemma}
Let $\rho \subseteq X \times Y, X \subseteq X', Y \subseteq Y'$: then $\rho$ is small in $X \times Y$ iff it is small in $X' \times Y'$.
\end{lemma}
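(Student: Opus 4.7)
The plan is to unpack the definition of smallness in each direction and observe that rectangles in the larger ambient set restrict to rectangles in the smaller one without affecting intersections with $\rho$, since $\rho$ already lives in $X \times Y$. Both implications reduce to trivial set-theoretic manipulations; I include them mainly to clarify why this robustness holds.

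For the forward direction, suppose $\rho$ is small in $X \times Y$ and let $Q \subseteq \rho$. By hypothesis, write $Q = \rho \cap \bigcup_{i=1}^n (A_i \times B_i)$ with $A_i \subseteq X$ and $B_i \subseteq Y$. Since $A_i \subseteq X \subseteq X'$ and $B_i \subseteq Y \subseteq Y'$, the very same decomposition witnesses smallness in $X' \times Y'$.

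For the converse, suppose $\rho$ is small in $X' \times Y'$ and let $Q \subseteq \rho$. By hypothesis, write $Q = \rho \cap \bigcup_{i=1}^n (A'_i \times B'_i)$ with $A'_i \subseteq X'$ and $B'_i \subseteq Y'$. Put $A_i = A'_i \cap X$ and $B_i = B'_i \cap Y$. The key point is that because $\rho \subseteq X \times Y$, intersecting each rectangle with $\rho$ is unaffected by replacing $A'_i \times B'_i$ with $A_i \times B_i$: explicitly, $\rho \cap (A'_i \times B'_i) = \rho \cap (X \times Y) \cap (A'_i \times B'_i) = \rho \cap (A_i \times B_i)$. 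Hence $Q = \rho \cap \bigcup_{i=1}^n (A_i \times B_i)$, which witnesses smallness in $X \times Y$.

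The main obstacle, honestly, is none; this lemma is essentially a sanity check confirming that smallness is an intrinsic property of $\rho$. The only thing worth articulating carefully is the remark $\rho \cap (A'_i \times B'_i) = \rho \cap (A_i \times B_i)$, which is where $\rho \subseteq X \times Y$ gets used.
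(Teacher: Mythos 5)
Your proof is correct and simply spells out the routine verification that the paper dismisses as ``Immediate'': the forward direction reuses the same rectangles, and the converse intersects each rectangle with $X \times Y$, using $\rho \subseteq X \times Y$ to see the intersection with $\rho$ is unchanged. This is exactly the argument the paper intends, just written out in full.
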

\begin{proof}
Immediate.
\end{proof}
\begin{lemma}
The small relations form an ideal: that is,
\begin{itemize}
\item The empty relation is small.
\item Any subrelation of a small relation is small.
\item Any finite union of small relations is small.
\end{itemize}
\end{lemma}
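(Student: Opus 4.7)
My plan is to handle the three clauses in turn; only the union clause carries real content. Write $\mathcal{A}$ for the Boolean algebra of finite unions of rectangles in $X \times Y$.

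The empty case is immediate: the only subset of $\emptyset$ is $\emptyset$ itself, realized by the empty union of rectangles. For the subrelation clause, given $\rho' \subseteq \rho$ with $\rho$ small and $Q \subseteq \rho'$, smallness of $\rho$ supplies $R \in \mathcal{A}$ with $Q = \rho \cap R$, and then the sandwich $Q \subseteq \rho' \cap R \subseteq \rho \cap R = Q$ forces $Q = \rho' \cap R$ as well.

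For the finite-union clause, induction reduces everything to the case of two small relations $\rho_1, \rho_2$, and the subrelation clause just proved lets me replace $\rho_2$ by $\rho_2 \setminus \rho_1$ and assume $\rho_1 \cap \rho_2 = \emptyset$. Given $Q \subseteq \rho_1 \cup \rho_2$, set $Q_i = Q \cap \rho_i$ and use smallness of each $\rho_i$ to pick $R_i \in \mathcal{A}$ with $\rho_i \cap R_i = Q_i$. The naive $R = R_1 \cup R_2$ fails because $\rho \cap R$ also picks up the cross terms $\rho_1 \cap R_2$ and $\rho_2 \cap R_1$. My plan is to eliminate these by inserting a \emph{separator}: an element $S \in \mathcal{A}$ with $\rho_1 \subseteq S$ and $S \cap \rho_2 = \emptyset$. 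Given such an $S$, the combination $R := (R_1 \cap S) \cup (R_2 \cap S^c) \in \mathcal{A}$ satisfies $\rho_1 \cap R = \rho_1 \cap R_1 = Q_1$ (the second summand drops out since $\rho_1 \subseteq S$), and symmetrically $\rho_2 \cap R = Q_2$, so $\rho \cap R = Q$ as required.

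The hard part will be constructing $S$. Since smallness forces the restriction $\mathcal{A} \to \mathcal{P}(\rho_i)$ to be surjective for each $i$, a separator exists iff the ideals $\{R \in \mathcal{A} : R \cap \rho_i = \emptyset\}$ are coprime in $\mathcal{A}$, in which case the Chinese remainder theorem for Boolean algebras delivers exactly the joint realizability we need; equivalently, via Stone duality (with $\mathcal{A}$ dual to $\beta X \times \beta Y$), the separator exists iff the closures of $\rho_1$ and $\rho_2$ are disjoint in $\beta X \times \beta Y$. I expect to establish this by exploiting the abundance of trace-realizing rectangles that smallness of the $\rho_i$ provides: any hypothetical common limit ultrafilter $(\xi, \eta)$ of $\rho_1$ and $\rho_2$ must sit in every clopen rectangle meeting either relation, and the singleton-isolating rectangles supplied by smallness should give enough separation to preclude this.
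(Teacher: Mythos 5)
Your first two clauses are fine, and your analysis of the union clause is correct as far as it goes: after making $\rho_1$ and $\rho_2$ disjoint, a set $R\in\mathcal{A}$ with $(\rho_1\cup\rho_2)\cap R=\rho_1$ is precisely a separator ($\rho_1\subseteq R$ and $R\cap\rho_2=\emptyset$), so the separator you are trying to build is not merely sufficient but necessary. The gap is the step you defer to the end (``I expect to establish this\dots''): no argument is given for the existence of $S$, and none can be, because separators do not exist in general --- the third clause of the lemma is in fact false. Take $Y=\mathbb{N}$ and $X=\{(a,b)\in\mathbb{N}^2:a\neq b\}$, and let $\rho_1,\rho_2\subseteq X\times Y$ be the graphs of the two coordinate projections $f(a,b)=a$ and $g(a,b)=b$. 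These are disjoint, and each is small by Lemma 6. Suppose $R=\bigcup_{i=1}^{n}A_i\times B_i$ were a separator. For each $(a,b)\in X$ choose $\chi(a,b)=i$ with $(a,b)\in A_i$ and $a\in B_i$ (possible since $\rho_1\subseteq R$); Ramsey's theorem gives an infinite $M\subseteq\mathbb{N}$ and a fixed $i$ with $\chi(a,b)=i$ for all $a<b$ in $M$. Choosing $a<b<c$ in $M$, the pair $(a,b)$ yields $(a,b)\in A_i$ and the pair $(b,c)$ yields $b\in B_i$, so $((a,b),b)\in R\cap\rho_2$, contradicting $R\cap\rho_2=\emptyset$.

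Your own Stone-duality reformulation exhibits the same failure more conceptually: a separator exists iff $\overline{\rho_1}\cap\overline{\rho_2}=\emptyset$ in $\beta X\times\beta Y$, but for any nonprincipal $\eta\in\beta\mathbb{N}$ the sets $(A\times A)\setminus\{(a,a):a\in A\}$ for $A\in\eta$ have the finite intersection property, hence lie in some $\xi\in\beta X$, and then $\beta f(\xi)=\beta g(\xi)=\eta$, so $(\xi,\eta)$ lies in both closures. Smallness of each $\rho_i$ separately supplies rectangles cutting arbitrary traces on that $\rho_i$, but gives no control over what those rectangles do to the other relation, and the singleton-isolating rectangles you hope to exploit cannot distinguish $\rho_1$ from $\rho_2$ at a common limit point. (For what it is worth, the paper's own proof of this clause is the single word ``Immediate,'' so the defect lies in the statement rather than in your strategy; what your argument genuinely establishes is the first two clauses, together with the equivalence, for disjoint small $\rho_1,\rho_2$, of smallness of $\rho_1\cup\rho_2$ with disjointness of $\overline{\rho_1}$ and $\overline{\rho_2}$ in $\beta X\times\beta Y$.)
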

\begin{proof}
Immediate.
\end{proof}
\begin{lemma}
Any finite relation is small.
\end{lemma}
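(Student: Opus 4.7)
The plan is to observe that the result follows almost immediately from the definition, and optionally to decompose it via the preceding ideal lemma.

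My first approach would be direct. Given a finite relation $\rho \subseteq X \times Y$ and an arbitrary subrelation $Q \subseteq \rho$, note that $Q$ is itself finite, say $Q = \{(x_1,y_1), \ldots, (x_n,y_n)\}$. Then taking $A_i = \{x_i\}$ and $B_i = \{y_i\}$, the union $\bigcup_{i=1}^n (A_i \times B_i)$ is precisely $Q$, so
\[
    \rho \cap \bigcup_{i=1}^n (A_i \times B_i) = \rho \cap Q = Q,
\]
since $Q \subseteq \rho$. This exhibits $Q$ in the required form, so $\rho$ is small.

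Alternatively, and more in the spirit of the surrounding lemmas, I would first check that every singleton relation $\{(x,y)\}$ is small: its only subrelations are $\emptyset$ and itself, and each is visibly a finite union of rectangles (the first being covered by the empty union, the second by the single rectangle $\{x\} \times \{y\}$). Then any finite relation is a finite union of singletons, and the previous lemma tells us that finite unions of small relations are small.

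There is no real obstacle here; the statement is essentially unpacking the definition in the finite case, and I would expect to simply mark it as immediate, just as the authors did with the preceding two lemmas.
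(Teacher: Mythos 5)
Your direct argument is correct and is exactly the ``immediate'' verification the paper leaves implicit: any subrelation $Q$ of a finite $\rho$ is a finite union of singleton rectangles $\{x_i\}\times\{y_i\}$, and intersecting their union with $\rho$ returns $Q$. The alternative decomposition via the ideal lemma is also fine, but adds nothing beyond the one-line direct proof.
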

\begin{proof}
Immediate.
\end{proof}
\begin{lemma}
The ``full relation'' $X \times Y$ is small iff either $X$ or $Y$ is finite.
\end{lemma}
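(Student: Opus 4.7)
The plan is to prove both directions by classifying how arbitrary subsets of $X \times Y$ decompose.

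For the easy direction, suppose (without loss of generality) $Y$ is finite, say $Y = \{y_1, \ldots, y_m\}$. Given any $Q \subseteq X \times Y$, I would set $Q_i = \{x \in X : (x, y_i) \in Q\}$ and write $Q = \bigcup_{i=1}^m (Q_i \times \{y_i\})$, exhibiting $Q$ as a finite union of rectangles (intersecting with $\rho = X \times Y$ is trivial here). Since $Q$ was arbitrary, $X \times Y$ is small.

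For the harder direction, I would argue the contrapositive: assume both $X$ and $Y$ are infinite and construct a $Q \subseteq X \times Y$ witnessing non-smallness. Pick injections $f : \mathbb{N} \hookrightarrow X$ and $g : \mathbb{N} \hookrightarrow Y$ and take $Q = \{(f(n), g(n)) : n \in \mathbb{N}\}$, an infinite ``antichain'' living on a diagonal. Suppose for contradiction that $Q = \bigcup_{i=1}^{N} (A_i \times B_i)$. The key observation is that no rectangle $A_i \times B_i$ contained in $Q$ can contain two distinct points of $Q$: if $(f(n_1), g(n_1))$ and $(f(n_2), g(n_2))$ both lie in $A_i \times B_i$ with $n_1 \neq n_2$, then $(f(n_1), g(n_2)) \in A_i \times B_i \subseteq Q$, forcing $f(n_1) = f(m)$ and $g(n_2) = g(m)$ for some $m$; injectivity of $f$ and $g$ then gives $m = n_1 = n_2$, a contradiction. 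Hence $|Q| \leq N$ is finite, contradicting $|Q| = \aleph_0$.

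The main obstacle, such as it is, lies in the second direction: one needs to cook up a subset of $X \times Y$ that genuinely resists any finite rectangular decomposition. The diagonal-in-a-countable-subsquare trick is the natural candidate, and the whole argument then reduces to the antichain observation above, which is essentially the combinatorial heart of the lemma.
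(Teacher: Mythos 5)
Your proof is correct. The easy direction is the same as the paper's (slicing along the finite factor), but your hard direction uses a different witness: the paper reduces to $\mathbb{N}\times\mathbb{N}$ and shows the order relation $\leqslant$ is not a finite union of rectangles (each $A_i$ must be bounded above by $\min B_i$, hence finite, so some pair $(q,q+1)$ is missed), whereas you take the graph of an injective matching between countable subsets of $X$ and $Y$ and observe that any rectangle contained in such a graph is a singleton, so a finite union of them cannot be infinite. Your version has two small advantages: the singleton-rectangle observation is arguably cleaner than the boundedness argument, and by building the witness directly inside $X\times Y$ via injections you sidestep the paper's (unargued, though easily justified via its Lemmas 2 and 3) reduction step ``it suffices to find a non-small relation between two countably infinite sets.'' One point worth noting explicitly: your witness $Q$ is the graph of a partial injection, which by the paper's Lemma 6 is itself a \emph{small} relation; there is no tension, since smallness of $Q$ only requires subsets of $Q$ to be traces of rectangle unions on $Q$, while smallness of the full relation $X\times Y$ requires $Q$ to literally equal a finite union of rectangles --- which it does not.
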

\begin{proof}
$\Leftarrow$ Say $|X| < \infty$, $X = \{ x_{i}: 1 \leqslant i \leqslant n \}$. Then for any $Q \subseteq X \times Y$,
define sets $D_{i} \subseteq Y$ by
\[	D_{i} = \{ y \in Y : (x_{i}, y) \in Q \}, \qquad 1 \leqslant i \leqslant n.
\] 
Then
\[	Q = \bigcup_{i=1}^{n} \{ x_{i} \} \times D_{i} \qquad \text{and} \  X \times Y \  \text{is small.}
\]
$\Rightarrow$ It suffices to find a non-small relation between two countably infinite sets. 
We show that the relation $\leqslant$ on $\mathbb{N} \times \mathbb{N}$ is not a union of rectangles. 
Suppose instead that
\[	\leqslant \  = \  \bigcup_{i = 1}^{n} A_{i} \times B_{i}, \qquad A_{i}, B_{i} \subseteq \mathbb{N}.
\]
We can assume no $B_{i}$ is empty. But $a \leqslant b$ for every $a \in A_i$, $b \in B_i$, so each $A_{i}$ must be finite; pick $q \in \mathbb{N} - 
\cup_{i=1}^{n}A_{i}$.
Then $(q, q+1) \notin A_{i} \times B_{i} \ \forall i$ and so $(q, q+1) \notin \ \leqslant$, a contradiction.
\end{proof}
\ \\
Any function $f:X \rightarrow Y$ can be regarded as a relation on $X \times Y$.

\begin{lemma}
Functions and their opposite relations are small.
\end{lemma}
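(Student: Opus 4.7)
The plan is to observe that a function's graph is so ``thin'' that a single rectangle already suffices to carve out any subset of it, so smallness is almost immediate.

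Let $f : X \rightarrow Y$ be a function, and write $\rho = \{(x, f(x)) : x \in X\} \subseteq X \times Y$ for its graph. Given any $Q \subseteq \rho$, I would let $A = \pi_X(Q) = \{x \in X : (x, f(x)) \in Q\}$ and then check that
\[
	\rho \cap (A \times Y) = Q,
\]
since a pair $(x, f(x)) \in \rho$ lies in $A \times Y$ exactly when $x \in A$, which by definition of $A$ holds precisely when $(x, f(x)) \in Q$. Hence a single rectangle $A \times Y$ works, and $\rho$ is small.

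For the opposite relation $\rho^{op} = \{(y, x) \in Y \times X : f(x) = y\}$, the same argument runs with the roles of the two factors swapped: given $Q' \subseteq \rho^{op}$, set $B = \pi_X(Q')$ and observe that $\rho^{op} \cap (Y \times B) = Q'$, again because each element of $\rho^{op}$ is determined by its $X$-coordinate.

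There is no real obstacle here; the only subtle point is remembering that smallness allows us to choose the rectangles depending on $Q$, and that we are intersecting with $\rho$ afterwards, so we don't have to worry about the rectangle $A \times Y$ containing spurious pairs. If desired, one could instead invoke Lemma~1 to embed $\rho$ into $A \times f(A)$ and use that a function with domain $A$ has a graph of cardinality $|A|$ inside the ``diagonal-like'' set there, but the one-rectangle argument above is cleaner and avoids any appeal to the earlier lemmas.
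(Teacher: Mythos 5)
Your argument is correct and is essentially the paper's own proof: both identify a subrelation of $f$ with the restriction of $f$ to the projection of that subrelation onto $X$, realizing it as $\rho \cap (A \times Y)$ with a single rectangle, and handle the opposite relation by symmetry. No issues.
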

\begin{proof}
Let $f: X \rightarrow Y$ be a function. Then any subrelation $g$ of $f$ is just the restriction of $f$ to some $E \subseteq X$,
and so $g$ = $f \cap (E \times Y)$. Hence $f$ is small. \\
By duality, the same holds for the opposite of $f$.
\end{proof}
\ \\
\noindent
Given two ``composable'' small relations $\rho$ on $X \times Y$ and $\sigma$ on $Y \times Z$, we can ask about their composite 
$\rho \circ \sigma$ on $X \times Z$:
\ \\
\ \\
\emph{Counterexample}: The composite of two small relations need not be small.
\ \\
\ \\
\begin{proof}
Let $X = \omega, Y = 1, Z = \omega$. By the above, the adjacency relations on $K_{(\omega,1)}$ and $K_{(1,\omega)}$ are small, but their composite is all of
$\omega \times \omega$, which is not. 
\end{proof}

\section{Comparing $G$ with $\beta G$}

\subsection{Operational lemmas}
We first prove some technical results that allow us to make deductions about $\beta G$ from $G$ and vice versa.

Write $\rho$ for the adjacency relation in a graph, that is,
\[	x \rho y \iff \exists e \in E : x = s(e), y = t(e).
\]
For subsets $A, B \subseteq V$ we write $A \rho B$ if some $a \rho b$ with $a \in A, b \in B$.
\ \\
We also use $\rho$ for adjacency in $\beta G$: when the distinction is important we use the symbols $\rho_{G}, \rho_{\beta G}$.
\ \\
The following adjacency criterion is fundamental:
\begin{lemma} \label{lem:adjcrit}
Let $\upsilon, \omega \in \beta V$. Then 
\[	\upsilon \rho_{\beta G} \omega \iff A \rho_{G} B \  \forall \ A \in \upsilon, B \in \omega.
\]
\end{lemma}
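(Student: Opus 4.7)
The plan is to unfold the definition of $\rho_{\beta G}$ via the graph structure of $\beta G$, namely that $\upsilon \rho_{\beta G} \omega$ means there exists $\epsilon \in \beta E$ with $\beta s(\epsilon) = \upsilon$ and $\beta t(\epsilon) = \omega$. Using formula~(\ref{eq:ultmorph}), this is equivalent to saying that $s^{-1}(A) \in \epsilon$ for all $A \in \upsilon$ and $t^{-1}(B) \in \epsilon$ for all $B \in \omega$ (one direction is immediate; the other uses maximality of ultrafilters to upgrade the containments $\upsilon \subseteq \beta s(\epsilon)$, $\omega \subseteq \beta t(\epsilon)$ to equalities).

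For the forward direction $(\Rightarrow)$, given such an $\epsilon$, I would fix arbitrary $A \in \upsilon$ and $B \in \omega$. Then $s^{-1}(A)$ and $t^{-1}(B)$ both lie in the ultrafilter $\epsilon$, so their intersection is nonempty, and any witnessing $e \in E$ satisfies $s(e) \in A$ and $t(e) \in B$, giving $A \rho_G B$.

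For the reverse direction $(\Leftarrow)$, the idea is to \emph{construct} such an $\epsilon$ via the finite intersection property lemma. Consider the family
\[ \Theta = \{ s^{-1}(A) : A \in \upsilon \} \cup \{ t^{-1}(B) : B \in \omega \} \subseteq \mathcal{P}(E). \]
To check FIP, pick finitely many sets $s^{-1}(A_1), \dots, s^{-1}(A_n), t^{-1}(B_1), \dots, t^{-1}(B_m)$ from $\Theta$. Setting $A = \bigcap_i A_i \in \upsilon$ and $B = \bigcap_j B_j \in \omega$ (closure of ultrafilters under finite intersections), the combined intersection equals $s^{-1}(A) \cap t^{-1}(B)$, and this is nonempty precisely when $A \rho_G B$ --- which is granted by hypothesis. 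The FIP lemma then produces $\epsilon \in \beta E$ containing all of $\Theta$, and by the equivalence noted above we get $\beta s(\epsilon) = \upsilon$ and $\beta t(\epsilon) = \omega$, witnessing $\upsilon \rho_{\beta G} \omega$.

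There is no real obstacle here: the only subtlety is remembering to apply the FIP lemma on $E$ (not on $V$), and to exploit closure of $\upsilon$ and $\omega$ under finite intersection so that the FIP check collapses to the hypothesis $A \rho_G B$ applied to a single pair of sets.
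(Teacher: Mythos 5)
Your proposal is correct and follows essentially the same route as the paper: both reduce $\upsilon \rho_{\beta G} \omega$ to the existence of an ultrafilter on $E$ containing all sets $s^{-1}(A)$, $t^{-1}(B)$ for $A \in \upsilon$, $B \in \omega$, and then invoke the finite intersection property, observing that closure of $\upsilon$ and $\omega$ under finite intersections (together with preimages commuting with intersections) collapses the FIP check to the single condition $s^{-1}(A) \cap t^{-1}(B) \neq \emptyset$, i.e.\ $A \rho_G B$. Your version is slightly more careful than the paper's in spelling out why containment of $\upsilon$ in $\beta s(\epsilon)$ upgrades to equality via maximality of ultrafilters.
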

\begin{proof}
An edge $\xi \in \beta E$ joins $\upsilon$ to $\omega$ just if
\[	(\beta s)(\xi) = \upsilon,\qquad (\beta t)(\xi) = \omega
\]
\[	\text{i.e.} \qquad s^{-1}(A), t^{-1}(B) \in \xi,\qquad \forall \ A \in \upsilon, B \in \omega
\]
We can find such a $\xi$ if and only if these sets have the finite intersection property, that is:
\[	s^{-1}(A) \cap t^{-1}(B) \neq \emptyset,
\]
i.e. just if each $A \rho B$.
\end{proof}

\begin{corollary} \label{cor:adjacent}
$G$ is a full subgraph of $\beta G$: that is, $[x],[y] \in V$ are adjacent in $\beta G$ only if $x,y$ are adjacent in $G$ ($ x, y \in V$). 
\end{corollary}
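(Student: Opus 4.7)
The plan is to read this off directly from Lemma~\ref{lem:adjcrit} by specialising to principal ultrafilters. Assume $[x] \rho_{\beta G} [y]$. Applying the adjacency criterion with $\upsilon = [x]$ and $\omega = [y]$ yields $A \rho_G B$ for every $A \in [x]$ and every $B \in [y]$.

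The key step is to exploit the smallest available witnesses in each principal ultrafilter: by definition of $[x]$, the singleton $\{x\}$ belongs to $[x]$, and similarly $\{y\} \in [y]$. Taking $A = \{x\}$ and $B = \{y\}$ in the conclusion of the previous paragraph forces $\{x\} \rho_G \{y\}$, which unpacks to the existence of an edge $e \in E$ with $s(e) = x$, $t(e) = y$; that is, $x \rho_G y$.

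For the converse direction, if $x \rho_G y$ in $G$, then picking any $A \in [x]$ and $B \in [y]$ we have $x \in A$ and $y \in B$, so the same edge witnesses $A \rho_G B$; by Lemma~\ref{lem:adjcrit} again, $[x] \rho_{\beta G} [y]$. (This converse is in any case already implicit in the fact that $G \leqslant \beta G$ as a subgraph, recorded in the subsection ``$\beta G$ is an extension of $G$''.) Together these give the full-subgraph statement.

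There is no real obstacle here: the whole content of the corollary has already been packaged into Lemma~\ref{lem:adjcrit}, and what remains is only to observe that principal ultrafilters are generated, in the finite intersection sense, by a single point. The only thing one has to be careful about is to distinguish $\rho_G$ on elements from $\rho_G$ on subsets, but this is precisely the convention fixed just before the lemma.
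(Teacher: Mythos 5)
Your proof is correct and follows essentially the same route as the paper: specialise Lemma~\ref{lem:adjcrit} to the principal ultrafilters $[x]$, $[y]$ and choose the singleton witnesses $A=\{x\}$, $B=\{y\}$. The extra paragraph on the converse direction is harmless (and, as you note, already contained in $G \leqslant \beta G$), so nothing is missing.
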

\begin{proof}
By the above, $[x] \rho_{\beta G} [y]$ just if each $A \rho B$ for $A \in [x], B \in [y]$, 
i.e. for $x \in A, y \in B$. Choosing $A = \{ x \}, B = \{ y \}$ we see this forces 
$\{ x \} \rho_{\beta G} \{ y \}$, i.e. $x \rho_{G} y$.
\end{proof}
\ \\
To relate paths and connectivity between $G$ and $\beta G$, we need some notation: 
For any vertex $x$ and natural number $n$, let $x^{(n)}$ be the set of all vertices which can be reached from $x$ by a path of length $n$.
In particular, $x^{(0)} = \{ x \}$ and $x^{(1)}$ is the set of all \emph{successors} of $x$, i.e. vertices $y$ for which $x \rho y$.
We also write $x^{(-n)} = \{ y : x \in y^{(n)} \}$, the set of all vertices from which $x$ can be reached by a path of length $n$, and we write
\[
	A^{(n)} = \bigcup_{x \in A} x^{(n)}, \qquad A^{(-n)} = \bigcup_{x \in A} x^{(-n)}, \qquad \forall \ A \subseteq V.
\]
The next technical result describes a situation in which a graph can be viewed as a composite of two others, and shows that
this relationship can be lifted from $G$ to $\beta G$.  
\begin{lemma} \label{lem:compthm}
Let $G$, $H$, $K$ be graphs on the same vertex set V. Then
\[	\rho_{G} \rho_{H} = \rho_{K} \qquad \Rightarrow \qquad \rho_{\beta G} \rho_{\beta H} = \rho_{\beta K}. 
\]
\end{lemma}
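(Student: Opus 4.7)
The plan is to prove both inclusions $\rho_{\beta G}\rho_{\beta H} \subseteq \rho_{\beta K}$ and $\rho_{\beta K} \subseteq \rho_{\beta G}\rho_{\beta H}$ using the adjacency criterion (Lemma~\ref{lem:adjcrit}) together with the basic ultrafilter facts listed in the preliminaries. Throughout, I will use the notation $A^{(1)}_G = \{y : \exists a \in A,\ a \rho_G y\}$ and $B^{(-1)}_H = \{y : \exists b \in B,\ y \rho_H b\}$, and repeatedly exploit the observation that $A \rho_G C$ is equivalent to $C \cap A^{(1)}_G \neq \emptyset$, and dually $C \rho_H B$ to $C \cap B^{(-1)}_H \neq \emptyset$.

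For the forward inclusion, suppose $\upsilon \rho_{\beta G} \tau$ and $\tau \rho_{\beta H} \omega$, and fix arbitrary $A \in \upsilon$, $B \in \omega$. By Lemma~\ref{lem:adjcrit}, the set $A^{(1)}_G$ meets every $C \in \tau$, hence $A^{(1)}_G \in \tau$; similarly $B^{(-1)}_H \in \tau$, and so their intersection is nonempty. Any $y$ in this intersection supplies $a \in A$, $b \in B$ with $a \rho_G y \rho_H b$, that is, $a \rho_K b$, so $A \rho_K B$. Since $A,B$ were arbitrary, Lemma~\ref{lem:adjcrit} yields $\upsilon \rho_{\beta K} \omega$.

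For the reverse inclusion, assume $\upsilon \rho_{\beta K} \omega$. The idea is to build an intermediate ultrafilter $\tau$ witnessing the composition, by applying Lemma~1 to the family
\[
\Theta = \{ A^{(1)}_G : A \in \upsilon \} \cup \{ B^{(-1)}_H : B \in \omega \}.
\]
To check the finite intersection property, given finitely many $A_i \in \upsilon$ and $B_j \in \omega$, take $A = \bigcap A_i \in \upsilon$ and $B = \bigcap B_j \in \omega$; since $A^{(1)}_G \subseteq A_i^{(1)}_G$ and $B^{(-1)}_H \subseteq B_j^{(-1)}_H$, it suffices to show $A^{(1)}_G \cap B^{(-1)}_H \neq \emptyset$. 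But $\upsilon \rho_{\beta K} \omega$ gives $A \rho_K B$, so there exist $a\in A$, $b\in B$ with $a \rho_G y \rho_H b$ for some $y$, and this $y$ lies in the desired intersection. Lemma~1 then furnishes an ultrafilter $\tau \supseteq \Theta$. For any $A \in \upsilon$ and $C \in \tau$, the set $C \cap A^{(1)}_G$ is nonempty (as the intersection of two members of $\tau$), which says precisely $A \rho_G C$; hence $\upsilon \rho_{\beta G} \tau$ by Lemma~\ref{lem:adjcrit}, and symmetrically $\tau \rho_{\beta H} \omega$.

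The main obstacle is the reverse inclusion, specifically verifying the finite intersection property of $\Theta$; the trick is to reduce many $A_i,B_j$ to a single $A,B$ by intersecting within the ultrafilters, at which point the hypothesis $\rho_G \rho_H = \rho_K$ applied in $G$ itself provides the required witness $y$. Once $\tau$ is in hand, translating its defining properties back into adjacency via the standard dictionary between ``meets every member of $\tau$'' and ``belongs to $\tau$'' finishes the argument.
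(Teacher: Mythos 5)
Your proof is correct and follows essentially the same route as the paper: the paper runs the argument as a single chain of equivalences, characterising the existence of the intermediate ultrafilter via the finite intersection property of the family $\{A^{(1)}\} \cup \{B^{(-1)}\}$ and reducing finite intersections to a single $A^{(1)} \cap B^{(-1)}$ exactly as you do. Splitting it into two explicit inclusions is only a presentational difference; the key ideas (Lemma~\ref{lem:adjcrit}, the FIP criterion, and the inclusion $(\cap A_i)^{(1)} \subseteq \cap A_i^{(1)}$) are identical.
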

\begin{proof}
Take $\upsilon, \omega \in \beta V$. Then $\upsilon \rho_{\beta G} \rho_{\beta H} \omega$ just if for some $\xi \in \beta V$,
$\upsilon \rho_{\beta G} \xi$ and $\xi \rho_{\beta H} \omega$. So $\xi$ contains all the sets
\[	A^{(1)} \ \text{and} \ B^{(-1)},\qquad \forall A \in \upsilon, B \in \omega
\]
which says that no finite intersection of these sets is empty. However,
\[	(\cap_{i=1}^{n}A_{i})^{(1)} \subseteq \cap_{i=1}^{n}A_{i}^{(1)}, \qquad 
	(\cap_{i=1}^{n}B_{i})^{(-1)} \subseteq \cap_{i=1}^{n}B_{i}^{(-1)},
\]
(where, implicitly, we use $\rho_{G}$ for calculating the superscript operation for the $A$s, and $\rho_{H}$ for the $B$s).
So the condition is that each $A^{(1)}$ meets each $B^{(-1)}$, i.e. each $A \rho_{G} \rho_{H} B$ ($A \in \upsilon$, $B \in \omega$).
But $\rho_{G} \rho_{H} = \rho_{K}$, so this just says that each $A \rho_{K} B$, i.e. $\upsilon \rho_{K} \omega$. 
\end{proof}
\ \\
An important consequence of this applies to powers of the adjacency relation $\rho_{G}$ : 
For $n > 0$, define $G^{n}$ to be the graph whose edge-set is the set $E^{n}$ of paths of length $n$ in $G$, with the obvious
source and target mappings $s, t: E^{n} \rightarrow V$ taking each path to its initial and final vertex, respectively. Then
$\rho_{G^{n}} = \rho_{G}^{n}$.
\begin{corollary} \label{cor:betapower}
$\rho_{\beta(G^{n})} = \rho_{\beta G}^{n}$.
\end{corollary}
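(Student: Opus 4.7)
The plan is to prove this by induction on $n$, using Lemma~\ref{lem:compthm} as the engine for the inductive step. The base case $n=1$ is immediate, since $G^{1} = G$ and so both sides equal $\rho_{\beta G}$.

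For the inductive step, suppose $\rho_{\beta(G^{n})} = \rho_{\beta G}^{n}$. I would observe that the three graphs $G^{n}$, $G$, and $G^{n+1}$ all share the vertex set $V$, and that $\rho_{G^{n}}\,\rho_{G} = \rho_{G}^{n}\,\rho_{G} = \rho_{G}^{n+1} = \rho_{G^{n+1}}$ by the preceding remark identifying $\rho_{G^{m}}$ with $\rho_{G}^{m}$. Lemma~\ref{lem:compthm} applied to the triple $(G^{n}, G, G^{n+1})$ then yields
\[
 \rho_{\beta(G^{n})}\,\rho_{\beta G} = \rho_{\beta(G^{n+1})}.
\]
Combining with the inductive hypothesis gives $\rho_{\beta(G^{n+1})} = \rho_{\beta G}^{n}\,\rho_{\beta G} = \rho_{\beta G}^{n+1}$, completing the induction.

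I do not anticipate a real obstacle here: Lemma~\ref{lem:compthm} is phrased exactly to license this kind of step, and the only thing one has to notice is that $G^{n+1}$ factors as the ``composite'' $G^{n}\circ G$ at the level of adjacency relations, which is precisely the observation made just before the corollary. If anything, the mild nuisance is keeping the two uses of the superscript notation (powers of relations versus iterated successor sets $x^{(n)}$) distinct, but since Lemma~\ref{lem:compthm} is stated in terms of $\rho$ directly, this plays no role in the argument.
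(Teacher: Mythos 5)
Your proof is correct and follows essentially the same route as the paper: induction on $n$ with Lemma~\ref{lem:compthm} supplying the inductive step. The only (immaterial) difference is that you peel off the factor of $G$ on the other side, writing $\rho_{G^{n+1}} = \rho_{G^{n}}\rho_{G}$ where the paper uses $\rho_{G^{n+1}} = \rho_{G}\rho_{G^{n}}$.
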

\begin{proof}
Clear when $n = 1$. Since $\rho_{G^{n+1}} = \rho_{G}\rho_{G^{n}}$, we then see inductively 
from $\rho_{\beta(G^{n})} = \rho_{\beta G}^{n}$ that 
$\rho_{\beta(G^{n+1})} = \rho_{\beta G}\rho_{\beta(G^{n})} = \rho_{\beta G}\rho_{\beta G}^{n} = \rho_{\beta G}^{n+1}.$
\end{proof}
\begin{lemma}
Let $\upsilon \in \beta V, x \in V$. Then there is a path of length $n$ in $\beta G$ from $[x]$ to $\upsilon$ iff
the set of $n$th $G$-successors of $x$ belongs to $\upsilon$:
\begin{eqnarray}
	\upsilon \in [x]_{\beta G}^{(n)} \iff x_{G}^{(n)} \in \upsilon.	\label{eq:ultnpath}
\end{eqnarray}
\end{lemma}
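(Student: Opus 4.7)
The plan is to unpack $\upsilon \in [x]_{\beta G}^{(n)}$ using the earlier results and reduce it to an ultrafilter-membership condition. By definition, $\upsilon \in [x]_{\beta G}^{(n)}$ means $[x] \, \rho_{\beta G}^{n} \, \upsilon$. Corollary~\ref{cor:betapower} rewrites $\rho_{\beta G}^{n}$ as $\rho_{\beta(G^{n})}$, so the condition becomes $[x] \, \rho_{\beta(G^{n})} \, \upsilon$, which is an ordinary one-step adjacency statement, now in the graph $\beta(G^{n})$.

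Next I would apply the fundamental adjacency criterion, Lemma~\ref{lem:adjcrit}, to the graph $G^{n}$. It asserts that $[x] \, \rho_{\beta(G^{n})} \, \upsilon$ holds iff $A \, \rho_{G^{n}} \, B$ for every $A \in [x]$ and every $B \in \upsilon$. Since $\rho_{G^{n}} = \rho_{G}^{n}$ encodes $n$-step reachability in $G$, the statement $A \, \rho_{G^{n}} \, B$ just says $A^{(n)} \cap B \neq \emptyset$ (superscripts taken in $G$).

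The key observation is that the outer quantifier over $A \in [x]$ collapses. On the one hand $\{x\} \in [x]$, so taking $A = \{x\}$ forces $x^{(n)} \cap B \neq \emptyset$ for every $B \in \upsilon$. On the other hand, every $A \in [x]$ contains $x$, so $A^{(n)} \supseteq x^{(n)}$, and this single test case implies the general one. Hence the criterion reduces to: $x_{G}^{(n)}$ meets every $B \in \upsilon$. Invoking the basic ultrafilter fact recalled in the introduction (``$W$ meets every $V \in \upsilon$ iff $W \in \upsilon$'') with $W = x_{G}^{(n)}$ yields the equivalent condition $x_{G}^{(n)} \in \upsilon$, which is precisely the claim.

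There is no real obstacle here: the proof is just a stringing together of machinery already assembled — the power-graph identity of Corollary~\ref{cor:betapower}, the adjacency criterion of Lemma~\ref{lem:adjcrit}, and the standard membership characterization for ultrafilters. The only subtlety is noticing that the principal ultrafilter $[x]$ contains the singleton $\{x\}$, which is exactly what lets the universal quantifier over $A$ reduce to a single test set and convert the abstract ``meets every member of $\upsilon$'' condition into outright membership in $\upsilon$.
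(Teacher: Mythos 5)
Your proof is correct, but it takes a different route from the paper's. The paper argues by direct induction on $n$: the base case $n=1$ is the adjacency criterion specialized to a principal ultrafilter, and the induction step hunts for an intermediate ultrafilter $\omega$ containing $x^{(n)}$ and all the sets $B^{(-1)}$ for $B \in \upsilon$, verifying the finite intersection property by hand via the inclusion $(\bigcap_i B_i)^{(-1)} \subseteq \bigcap_i B_i^{(-1)}$. You instead push the whole $n$-step statement through the power graph: Corollary~\ref{cor:betapower} converts $[x]\,\rho_{\beta G}^{n}\,\upsilon$ into the one-step adjacency $[x]\,\rho_{\beta(G^{n})}\,\upsilon$, a single application of Lemma~\ref{lem:adjcrit} to $G^{n}$ turns that into ``$A^{(n)}$ meets $B$ for all $A \in [x]$, $B \in \upsilon$,'' and the quantifier over $A$ collapses to the test set $\{x\}$ exactly as you say, after which the standard membership criterion finishes it. This is legitimate --- Corollary~\ref{cor:betapower} is established before this lemma and does not depend on it, so there is no circularity --- and it is arguably cleaner: the induction and the intersection bookkeeping are already packaged inside Lemma~\ref{lem:compthm} and Corollary~\ref{cor:betapower}, so your argument exposes the lemma as an immediate consequence of the machinery rather than re-deriving a special case of the composition theorem. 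What the paper's version buys in exchange is self-containedness: its induction step works directly with the ultrafilters on $V$ and does not require introducing the auxiliary graphs $G^{n}$ or the identity $\rho_{G^{n}} = \rho_{G}^{n}$. (One minor caveat for both proofs: the $G^{n}$ construction, and the paper's base case, start at $n=1$, so the $n=0$ case of the displayed equivalence, which reduces to $\upsilon = [x] \iff \{x\} \in \upsilon$, should be checked separately if it is needed.)
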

\begin{proof}
By induction on $n$. When $n = 1$, we compute
\[	\upsilon \in [x]^{(1)}_{\beta G} \iff [x] \rho_{\beta G} \upsilon \iff A \rho B \  \forall A \in [x], B \in \upsilon \iff \{ x \}\rho B \  \forall B \in \upsilon,
\]
and this last condition says that each $B \in \upsilon$ meets the set $\{y: x \rho y \} = x^{(1)}_{G}$, i.e. $x^{(1)}_{G} \in \upsilon$ as required.\\
For the induction step, suppose (\ref{eq:ultnpath}) holds. Then $\upsilon \in [x]^{(n+1)}$ iff $\omega \rho \upsilon$ for some 
ultrafilter $\omega \in [x]^{(n)}_{\beta G}$,
i.e. if we can find $\omega$ with
\[	x^{(n)} \in \omega \qquad \text{and} \qquad A \rho B \ \forall A \in \omega, B \in \upsilon
\]
Now, $A \rho B$ just if $A$ meets the set $B^{(-1)}$, 
and this happens for every $A \in \omega$ just if $B^{(-1)} \in \omega$.
We can find such an $\omega$ iff each
\[	x^{(n)} \cap \bigcap_{i=1}^{n} B_{i}^{(-1)} \neq \emptyset, \qquad B_{i} \in \upsilon
\]
But $\cap_{i=1}^{n} B_{i} \in \upsilon$ and $(\cap_{i=1}^{n} B_{i})^{(-1)} \subseteq \cap_{i=1}^{n} B_{i}^{(-1)} $; so this holds iff each
\[	x^{(n)} \cap B^{(-1)} \neq \emptyset, \qquad B \in \upsilon,
\]
which says that each $B \in \upsilon$ contains a successor of some $y \in x^{(n)}$, i.e. contains some $z \in x^{(n+1)}$. 
This just says that $x^{(n+1)} \in \upsilon$, as required.
\end{proof}
\subsection{Complete graphs}
We first show that, roughly speaking, $\beta G$ is a complete graph iff $G$ is. 
We need a definition of completeness appropriate to our definition of graph:

\begin{definition}
$G$ is \emph{pseudocomplete} if $s \times t: E \rightarrow V \times V$ is onto.
\end{definition}
\begin{lemma}
$G$ is pseudocomplete $\iff$ $\beta G$ is pseudocomplete.
\end{lemma}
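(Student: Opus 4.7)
The plan is to treat the two directions independently, each time reducing pseudocompleteness to a statement about adjacency and then invoking the tools just established: Lemma \ref{lem:adjcrit} for the forward direction, and Corollary \ref{cor:adjacent} for the reverse.

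For $(\Rightarrow)$, assume $G$ is pseudocomplete and pick an arbitrary pair $(\upsilon,\omega) \in \beta V \times \beta V$. I want an $\xi \in \beta E$ with $(\beta s)(\xi) = \upsilon$ and $(\beta t)(\xi) = \omega$, equivalently $\upsilon \rho_{\beta G}\omega$. By Lemma \ref{lem:adjcrit}, it suffices to check that $A \rho_G B$ for all $A \in \upsilon$, $B \in \omega$. Since $\upsilon,\omega$ are proper ultrafilters, every such $A,B$ is nonempty; pick $a \in A$ and $b \in B$, and use pseudocompleteness of $G$ to get an edge from $a$ to $b$, which witnesses $A \rho_G B$.

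For $(\Leftarrow)$, assume $\beta G$ is pseudocomplete. Given any $(x,y) \in V \times V$, the pair $([x],[y])$ is hit by $\beta s \times \beta t$, so $[x]\rho_{\beta G}[y]$. By Corollary \ref{cor:adjacent}, this forces $x \rho_G y$, i.e.\ some edge in $E$ has source $x$ and target $y$; hence $s \times t$ is onto.

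There is no serious obstacle: both directions are short reductions, one applying the finite-intersection characterization of Lemma \ref{lem:adjcrit} (using only that members of an ultrafilter are nonempty), and the other applying Corollary \ref{cor:adjacent} at principal ultrafilters. If anything, the only thing to watch is being careful that ``pseudocomplete'' for $\beta G$ means surjectivity of $\beta s \times \beta t$ rather than $\beta(s \times t)$, but the distinction does not matter here because the argument works directly from the defining equations $(\beta s)(\xi)=\upsilon$, $(\beta t)(\xi)=\omega$.
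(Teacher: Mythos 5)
Your proof is correct and follows the paper's argument essentially verbatim: the forward direction reduces pseudocompleteness of $\beta G$ to the adjacency criterion of Lemma \ref{lem:adjcrit} using only that members of an ultrafilter are nonempty, and the reverse direction applies Corollary \ref{cor:adjacent} at the principal ultrafilters $[x],[y]$. No differences worth noting.
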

\begin{proof}
$\Leftarrow$ Fix $x, y \in V$. Then $[x], [y]$ are certainly adjacent in $\beta G$, hence by Corollary \ref{cor:adjacent}, 
$x$ and $y$ are already adjacent in $G$.\\
$\Rightarrow$ Fix $\upsilon, \omega$ in $\beta V$. Then we seek $\upsilon \rho \omega$. By Lemma \ref{lem:adjcrit}, 
it suffices to show $A \rho B$ whenever $A \in \upsilon, B \in \omega$. But certainly $A, B$ are nonempty: fix $a \in A, b \in B$.
Then by pseudocompleteness of $G$, we have $a \rho b$ and $A \rho B$.
\end{proof}

\subsection{Invariants}
We next show that several familiar graph invariants are the same for $\beta G$ as for $G$.
(The Greek letters used for these invariants are as in \cite{hararyf:grathe}.)

\begin{proposition}
$G$ and $\beta G$ have the same maximum out-degree; that is, $\Delta(G) = \Delta(\beta G)$,
provided at least one of these is finite.
\end{proposition}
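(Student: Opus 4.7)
\begin{proofsketch}
The plan is to prove the two inequalities $\Delta(G)\leqslant\Delta(\beta G)$ and $\Delta(\beta G)\leqslant\Delta(G)$, and then observe that the hypothesis ``at least one of these is finite'' can be boot-strapped via the first inequality to guarantee both are finite at once.

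The easy direction is $\Delta(G)\leqslant\Delta(\beta G)$. For any $x\in V$, Corollary~\ref{cor:adjacent} shows that the principal ultrafilters $[y]$ which are $\beta G$-successors of $[x]$ correspond bijectively to the $G$-successors $y$ of $x$. Hence the out-degree of $[x]$ in $\beta G$ is at least the out-degree of $x$ in $G$, and the sup inequality follows.

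The substantive direction is $\Delta(\beta G)\leqslant\Delta(G)$ under the assumption $d:=\Delta(G)<\infty$. I would argue by contradiction: suppose some $\upsilon\in\beta V$ has $d+1$ distinct successors $\omega_1,\ldots,\omega_{d+1}\in\beta V$. First I would separate these ultrafilters by pairwise disjoint witness sets, i.e.\ produce disjoint $B_1,\ldots,B_{d+1}\subseteq V$ with $B_i\in\omega_i$; this is the usual Hausdorff-type separation for finitely many distinct ultrafilters (for each pair $i\ne j$ pick $S_{ij}\in\omega_i$ with $V\setminus S_{ij}\in\omega_j$, set $B_i=\bigcap_{j\ne i}S_{ij}\setminus\bigcup_{j\ne i}S_{ji}$, and check disjointness). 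Then I would apply Lemma~\ref{lem:adjcrit}: since $\upsilon\rho_{\beta G}\omega_i$, every $A\in\upsilon$ satisfies $A\rho_G B_i$, which is precisely the statement that $A\cap B_i^{(-1)}\neq\emptyset$. By the second bullet point of the ``constantly used facts'' this forces $B_i^{(-1)}\in\upsilon$ for every $i$. Hence $\bigcap_{i=1}^{d+1}B_i^{(-1)}\in\upsilon$, so in particular it is non-empty; any vertex $v$ in this intersection has a $G$-successor in each of the pairwise disjoint $B_i$, giving $v$ at least $d+1$ distinct successors and contradicting $\Delta(G)=d$.

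To handle the finiteness hypothesis: if $\Delta(G)<\infty$ the above directly gives equality, while if $\Delta(\beta G)<\infty$ then the first (trivial) inequality already forces $\Delta(G)<\infty$ and we reduce to the previous case. The main obstacle is really the separation step that turns ``$d+1$ distinct successor ultrafilters'' into ``$d+1$ pairwise disjoint witness sets''; once those sets are in hand, the $(-)^{(-1)}$-pullback into $\upsilon$ and the finite-intersection argument do the rest mechanically.
\end{proofsketch}
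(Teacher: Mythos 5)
Your argument is correct, but it takes a genuinely different route from the paper's. The paper argues directly: using the axiom of choice it fixes $d$ functions $f_1,\ldots,f_d\colon V\to V$ such that every successor of $x$ is some $f_i(x)$, and then shows---by intersecting finitely many witnesses $B_i$ inside $\omega$---that for any successor $\omega$ of $\upsilon$ there is a single $j$ with $f_j^{-1}(B)\in\upsilon$ for all $B\in\omega$, i.e.\ $\omega=\beta f_j(\upsilon)$; hence $\upsilon$ has at most $d$ successors. You instead argue by contradiction: separate $d+1$ hypothetical successors by pairwise disjoint sets $B_i\in\omega_i$, pull each $B_i^{(-1)}$ back into $\upsilon$ via Lemma~\ref{lem:adjcrit} and the ``meets every member'' criterion, and locate a single vertex of $V$ lying in $\bigcap_i B_i^{(-1)}$, which then has $d+1$ distinct successors. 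Both proofs turn on the same two ingredients (Lemma~\ref{lem:adjcrit} plus closure of an ultrafilter under finite intersections), but the decompositions differ: your route avoids the global choice of the enumerating functions $f_i$ (only finitely many choices of separating sets are needed) and is more elementary, whereas the paper's route yields the stronger structural conclusion that the successor set of $\upsilon$ is contained in $\{\beta f_1(\upsilon),\ldots,\beta f_d(\upsilon)\}$---an explicit enumeration rather than a mere cardinality bound. One small citation quibble: for the easy inequality $\Delta(G)\leqslant\Delta(\beta G)$ what you actually need is the embedding $G\leqslant\beta G$ (distinct successors $y$ of $x$ give distinct principal successors $[y]$ of $[x]$); Corollary~\ref{cor:adjacent} gives the converse implication and is not the relevant fact here, though this does not affect correctness.
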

\begin{proof}
Because $G \leqslant \beta G$ is a subgraph, we clearly have $\Delta(G) \leqslant \Delta(\beta G)$. 
Let $d = \Delta(G)$. We have to prove that no ultrafilter $\upsilon \in \beta V$ has more than $d$ $\rho$-successors 
in $\beta G$.\\
Using the axiom of choice, we can find $d$ functions $f_{i}:V \rightarrow V, 1 \leqslant i \leqslant d$ such that
\begin{eqnarray}
	x \rho y \Rightarrow \exists i: f_{i}(x) = y. \qquad \label{enumnbr} 
\end{eqnarray}
Now suppose $\upsilon, \omega \in \beta V$ with $\upsilon \rho \omega$. 
We shall show $\omega = f_{j}(\upsilon)$ for some $j$; this will imply that there can be at most $d$ such $\omega$, and so
$\delta(\upsilon) \leqslant d$, which will complete the proof. \\
By Lemma \ref{lem:adjcrit}, for each $B \in \omega$ we have $A \rho B \ \forall A \in \upsilon$,
so each such $A$ meets the set
\[	B^{(-1)} = \{ x: x \rho b \ \text{for some} \  b \in B \} = \cup_{i=1}^{d}f^{-1}_{i}(B) \ \text{by}\  (\ref{enumnbr}). 
\]
Hence $\cup_{i=1}^{d}f^{-1}_{i}(B) \in \upsilon, \  \forall B \in \omega$. We next claim that for some $j$,
\begin{eqnarray}	f^{-1}_{j}(B) \in \upsilon \qquad \forall B \in \omega. \qquad \label{fjminus1} 
\end{eqnarray}
If not, for each $i$ there is a $B_{i} \in \omega$ with $f^{-1}_{i}(B_{i}) \notin \upsilon$ ; 
letting $B = \cap_{i=1}^{d}B_{i} \in \omega$, we then have each $f^{-1}_{i}(B) \notin \upsilon$ and 
$\cup_{i=1}^{d}f^{-1}_{i}(B) \notin \upsilon$, a contradiction. \\
Hence such a $j$ exists, and from (\ref{fjminus1}) we have $\omega = f_{j}(\upsilon)$ as required.
\end{proof}

\begin{proposition} \label{prop:betagirth}
$G$ and $\beta G$ have the same girth (maximum distance between vertices): $\gamma(G) = \gamma(\beta G)$, 
provided at least one of these is finite.
\end{proposition}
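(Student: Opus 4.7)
The plan is to prove the two inequalities $\gamma(G) \leqslant \gamma(\beta G)$ and $\gamma(\beta G) \leqslant \gamma(G)$ separately, where the second direction requires the assumption that $\gamma(G)$ is finite. Since $\gamma(G) \leqslant \gamma(\beta G)$ always holds, finiteness of $\gamma(\beta G)$ forces finiteness of $\gamma(G)$ and then the second direction closes the loop.

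For $\gamma(G) \leqslant \gamma(\beta G)$, I would apply Corollary \ref{cor:adjacent} to each graph $G^n$ together with Corollary \ref{cor:betapower}. The former says that $[x] \rho_{\beta(G^n)} [y]$ iff $x \rho_{G^n} y$, and the latter identifies $\rho_{\beta(G^n)}$ with $\rho_{\beta G}^n$. Combining these gives that the $G$-distance between $x,y$ equals the $\beta G$-distance between $[x],[y]$, so taking suprema over principal ultrafilters (which is a subset of $\beta V \times \beta V$) yields $\gamma(G) \leqslant \gamma(\beta G)$.

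The reverse direction is the substantive step. Assume $\gamma(G) = n < \infty$, and fix arbitrary $\upsilon, \omega \in \beta V$. Combining Corollary \ref{cor:betapower} with Lemma \ref{lem:adjcrit}, the statement $\upsilon \rho_{\beta G}^k \omega$ is equivalent to $A \rho_G^k B$ for all $A \in \upsilon$, $B \in \omega$, which in turn says $B^{(-k)} \in \upsilon$ for every $B \in \omega$. Since every vertex of $V$ reaches every element of a nonempty $B$ in at most $n$ steps, we have the crucial covering identity $\bigcup_{k=0}^{n} B^{(-k)} = V$ for every $B \in \omega$; hence some $B^{(-k)}$ lies in $\upsilon$. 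The task is to find one $k \leqslant n$ that works uniformly in $B$.

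This uniformity is the main obstacle, and I would handle it by the standard finite-intersection argument inside $\omega$. If no single $k$ worked, then for each $k \in \{0, \ldots, n\}$ there would be some $B_k \in \omega$ with $B_k^{(-k)} \notin \upsilon$; setting $B^\ast = \bigcap_{k=0}^{n} B_k \in \omega$ and using the monotonicity $(B^\ast)^{(-k)} \subseteq B_k^{(-k)}$ would give $(B^\ast)^{(-k)} \notin \upsilon$ for every $k$, contradicting the covering identity applied to $B^\ast$. Thus a uniform $k \leqslant n$ exists, giving a path of length at most $n$ from $\upsilon$ to $\omega$ in $\beta G$, whence $\gamma(\beta G) \leqslant n = \gamma(G)$.
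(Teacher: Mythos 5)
Your proposal is correct and follows essentially the same route as the paper: the easy inequality via Corollaries \ref{cor:adjacent} and \ref{cor:betapower}, and the reverse inequality via a finite-intersection-property argument exploiting that all distances in $G$ are bounded by $\gamma(G)$. The only (cosmetic) difference is that the paper uniformizes over both ultrafilters at once, choosing rectangles $A_k \times B_k$ and using $\bigcup_{k=0}^{\gamma}\rho^k = V\times V$, whereas you uniformize only over $\omega$ using the sets $B^{(-k)}$ and the covering identity $\bigcup_{k=0}^{\gamma}B^{(-k)} = V$; both are the same underlying argument.
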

\begin{proof}
Let $\gamma = \gamma(G)$. To show $\gamma \leqslant \gamma(\beta G)$, take $x, y \in V$ of maximum
distance apart, with $d_{G}(x, y) = \gamma$. Then we claim $d_{\beta G}([x], [y]) = \gamma$; for whenever there
is a path of length $n$ from  $[x]$ to $[y]$, applying Corollaries \ref{cor:adjacent} and \ref{cor:betapower},
\[	[x] \rho_{\beta G}^{n} [y] \Rightarrow [x] \rho_{\beta (G^{n})} [y] \Rightarrow
	x \rho_{G^{n}} y \Rightarrow n \geqslant \gamma.
\]
For the other direction, consider $\upsilon, \omega \in \beta V$. 
We claim that $\upsilon \rho^{n} \omega$ for some $n \leqslant \gamma$. 
Otherwise, we can find $A_{n} \in \upsilon, B_{n} \in \omega$ for $0 \leqslant n \leqslant \gamma$
such that no $A_{n} \rho^{n} B_{n}$,
i.e. each $(A_{n} \times B_{n}) \cap \rho^{n} = \emptyset$. But then letting 
\[	A = \cap_{n=0}^{\gamma}A_{n} \in \upsilon, \qquad B = \cap_{n=0}^{\gamma}B_{n} \in \omega 
\]
we have $A \times B \subseteq \cap_{n=0}^{\gamma} A_{n} \times B_{n}$, so
\[	 \emptyset = (A \times B) \cap \bigcup_{n=0}^{\gamma} \rho^{n} = (A \times B) \cap (V \times V) = A \times B,
\]
a contradiction since $A$ and $B$ are both nonempty.      
\end{proof}

\begin{proposition}
$G$ and $\beta G$ have the same chromatic number: $\chi(G) = \chi(\beta G)$, 
provided either of these is finite.
\end{proposition}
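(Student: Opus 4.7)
The plan is to prove both inequalities $\chi(G) \leqslant \chi(\beta G)$ and $\chi(\beta G) \leqslant \chi(G)$ directly. The first is immediate from $G \leqslant \beta G$: a colouring $c: \beta V \to C$ of $\beta G$ restricts to a map $c \circ \eta_V : V \to C$, which is a colouring of $G$ because, by Corollary \ref{cor:adjacent}, $x \rho_G y$ implies $[x] \rho_{\beta G} [y]$.

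For the harder direction, I would exploit the fact that $\beta$ fixes finite sets: if $C$ is finite, the partition of $C$ into singletons forces every ultrafilter on $C$ to contain some $\{i\}$, so $\beta C = C$ via $[i] \leftrightarrow i$. Thus, given a colouring $c: V \to C$ of $G$ with $|C|$ finite, the natural candidate for a colouring of $\beta G$ with the same number of colours is simply $\beta c : \beta V \to \beta C = C$. Concretely, $\beta c(\upsilon)$ is the unique $i \in C$ such that $c^{-1}(\{i\}) \in \upsilon$.

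It then remains to verify that $\beta c$ really is a colouring. Suppose for contradiction that $\upsilon \rho_{\beta G} \omega$ with $\beta c(\upsilon) = \beta c(\omega) = i$. Then $A := c^{-1}(\{i\})$ lies in both $\upsilon$ and $\omega$, and the adjacency criterion (Lemma \ref{lem:adjcrit}) forces $A \rho_G A$, producing $a, b \in A$ with $a \rho_G b$ and $c(a) = c(b) = i$, contradicting that $c$ colours $G$. This establishes $\chi(\beta G) \leqslant \chi(G)$ whenever the right side is finite; if instead $\chi(\beta G)$ is finite, then $\chi(G) \leqslant \chi(\beta G)$ is finite too, and the construction above still applies.

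The only real content is the observation that $\beta$ preserves finite sets, which turns the abstract map $\beta c$ into an actual finite colouring; the rest is a direct application of the adjacency criterion. There is no substantial obstacle — unlike the ``no loops $\iff$ finitely colourable'' statement advertised in the abstract, this proposition is just a bookkeeping refinement once one has Lemma \ref{lem:adjcrit} in hand.
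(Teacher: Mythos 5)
Your proposal is correct and follows essentially the same route as the paper: both take the colouring of $\beta G$ to be $\beta c$ using the identification $\beta C = C$ for finite $C$, and the only difference is that you verify the colouring property via the adjacency criterion (Lemma \ref{lem:adjcrit}) applied to $A = c^{-1}(i)$, whereas the paper argues directly with an edge ultrafilter $\xi \in \beta E$ and the sets $s^{-1}c^{-1}(i)$, $t^{-1}c^{-1}(j)$ --- the same computation in different packaging. One small slip: for the easy direction you cite Corollary \ref{cor:adjacent}, which states that adjacency of $[x],[y]$ in $\beta G$ implies adjacency of $x,y$ in $G$, whereas what you actually need is the converse implication, which comes from the subgraph embedding $G \leqslant \beta G$ (or from the other direction of Lemma \ref{lem:adjcrit}).
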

\begin{proof}
Any $n$-colouring of $\beta G$ implies one for its subgraph $G$. 
Conversely, suppose $c : V \rightarrow C$ colours $G$, where $|C| = n < \infty$. 
Then $\beta c : \beta V \rightarrow \beta C = C$ since $C$ is finite.
We claim this map colours $\beta G$. \\
To prove this, note that $V$ is partitioned by the sets $c^{-1}(i)$, $i \in C$. 
Hence for any edge $\xi \in \beta E$, the ultrafilter $s(\xi)$ contains $c^{-1}(i)$ for a unique $i$, 
and similarly, $t(\xi)$ contains $c^{-1}(j)$ for a unique $j$. So applying (\ref{eq:ultmorph}),
\[     c(s(\xi)) = i,\qquad c(t(\xi)) = j,
\]
i.e. the source and target of the edge $\xi$ are coloured with $i$ and $j$ respectively.\\
Applying (\ref{eq:ultmorph}) in the other direction,
\[     s^{-1}c^{-1}(i), t^{-1}c^{-1}(j) \in \xi,
\]
which implies that their intersection cannot be $\emptyset$. But for any $k \in C$, there is no edge of $E$ whose 
source and target are both coloured with $k$; that is,
\[     s^{-1}c^{-1}(k) \cap t^{-1}c^{-1}(k) = \emptyset \qquad \forall k \in C.
\]
Hence $i \neq j$ and $\beta c$ is indeed a colouring.
\end{proof}
\ \\
Our results on complete graphs show that the finiteness conditions here are necessary. 
Also, as we shall see, if $\chi(G)$ is infinite then $\beta G$ contains a loop, 
and therefore cannot strictly be coloured at all in our sense. 
\subsection{Connectivity}

\begin{theorem}
$\beta G$ is strongly connected $\iff G$ is strongly connected and has finite girth.
\end{theorem}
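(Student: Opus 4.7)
For the $(\Leftarrow)$ direction, I would just invoke Proposition~\ref{prop:betagirth}: if $G$ is strongly connected with finite girth $\gamma$, that proposition yields $\gamma(\beta G) = \gamma < \infty$, so every pair of ultrafilters in $\beta V$ is joined by a $\beta G$-path of length at most $\gamma$, and $\beta G$ is strongly connected.

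For the $(\Rightarrow)$ direction, the plan is to prove directly that $\gamma(G) < \infty$ (which then also forces $G$ strongly connected). The strategy is to bound the forward and backward eccentricity at each vertex of $V$ individually by an ultrafilter argument, and then glue the bounds together by the triangle inequality.

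Fix $v \in V$ and write $e(v) = \sup_w d_G(v, w)$. Supposing for contradiction that $e(v) = \infty$, every $v^{(\leq n)} := \bigcup_{k=0}^n v^{(k)}$ is a proper subset of $V$, so the complements $V \setminus v^{(\leq n)}$ form a decreasing sequence of nonempty sets, which trivially enjoys the finite intersection property. Extend to an ultrafilter $\omega \in \beta V$ containing every $V \setminus v^{(\leq n)}$; since $v^{(n)} \subseteq v^{(\leq n)}$, we then have $v^{(n)} \notin \omega$ for all $n$, so by~(\ref{eq:ultnpath}) there is no $\beta G$-path from $[v]$ to $\omega$, contradicting strong connectivity. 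A symmetric argument (using Lemma~\ref{lem:adjcrit} applied to $G^n$ to see that $\upsilon \rho^n_{\beta G} [v] \iff v^{(-n)} \in \upsilon$) shows the backward eccentricity $e^*(v) = \sup_u d_G(u, v)$ is also finite.

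Finally, at any fixed $v_0 \in V$ the triangle inequality $d_G(u, w) \leq d_G(u, v_0) + d_G(v_0, w) \leq e^*(v_0) + e(v_0)$ holds for all $u, w$, so $\gamma(G) < \infty$. The main obstacle I anticipate is the temptation to invoke a compactness argument on $\beta V \times \beta V$: the sets $\rho^{\leq n}_{\beta G}$ are closed and, under strong connectivity, increase to the whole space, and one would like to conclude $\rho^{\leq N}_{\beta G} = \beta V \times \beta V$ for some finite $N$. But in a compact Hausdorff space an increasing union of closed sets need not terminate (Baire only delivers nonempty interior of some $\rho^{\leq n}_{\beta G}$), so that route fails, and the pointwise ultrafilter construction plus triangle inequality is the right substitute.
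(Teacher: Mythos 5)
Your proof is correct and follows essentially the same route as the paper: the $(\Leftarrow)$ direction rests on Proposition~\ref{prop:betagirth}, and the $(\Rightarrow)$ direction bounds the forward and backward reach of a fixed base vertex by an ultrafilter/finite-intersection argument using~(\ref{eq:ultnpath}) and then applies the triangle inequality, which is exactly the paper's covering argument $V = \bigcup_{i=0}^{m}x_{0}^{(i)} = \bigcup_{j=0}^{n}x_{0}^{(-j)}$ stated contrapositively. The only cosmetic difference is that the paper separately verifies strong connectivity of $G$ via Corollary~\ref{cor:adjacent} before bounding the girth, whereas you fold that into the finiteness of the eccentricities.
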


\begin{proof}
$\Leftarrow$: The proof of Proposition \ref{prop:betagirth} shows this. 
$\Rightarrow$: Take $x, y \in V$. Then by hypothesis $[x] \rho_{\beta G}^{n} [y]$ for some $n$,
and as in Proposition \ref{prop:betagirth}, this forces $x \rho_{G}^{n} y$, so there is a path from $x$ to $y$ in $G$. 
Hence $G$ is strongly connected. \\
Next, fix $x_{0} \in V$. For any $\upsilon \in \beta V$, by hypothesis some path in $\beta G$ connects $\upsilon$
to $[x_{0}]$, so $\upsilon$ contains a set $x_{0}^{(i)}$. 
Hence no ultrafilter on $V$ can avoid all of these sets, and some finite union of them must therefore be $V$. 
Applying a similar argument to the sets $x_{0}^{(-j)}$, we obtain
\[	V = \bigcup_{i=0}^{m}x_{0}^{(i)} = \bigcup_{j=0}^{n}x_{0}^{(-j)}, \qquad \text{for some}\ m\ \text{and}\ n.
\] 
So for any $u, v \in V$, we can find a path of bounded length from $u$ to $v$ via $x_{0}$, and in fact
$d(u, v) \leqslant m + n.$
Hence $\gamma(G) \leqslant m + n < \infty$.
\end{proof}

\subsection{Multiple Edges}

Call $G$ \emph{proper} if it has no multiple edges; this happens iff the map $s \times t : E \rightarrow V \times V$ is mono. 
To give a condition for $\beta G$ to be proper, we need the theory of ``small relations'' described previously.

\begin{proposition}
$\beta G$ is proper $\iff G$ is proper and $E$ is a small relation on $V \times V$. 
\end{proposition}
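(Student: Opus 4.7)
The plan is to first reduce to the case where $G$ is proper (which is necessary since $G \leqslant \beta G$) and then identify $E$ with its image $s \times t(E) \subseteq V \times V$, so that $s, t$ become the coordinate projections and $s^{-1}(A) \cap t^{-1}(B) = E \cap (A \times B)$. Let $\mathcal{A} \subseteq \mathcal{P}(E)$ be the Boolean subalgebra generated by the ``rectangles'' $E \cap (A \times B)$; since $(A \times B)^c = (A^c \times V) \cup (V \times B^c)$ is already a union of rectangles, $\mathcal{A}$ is precisely the collection of finite unions of rectangles restricted to $E$, so smallness of $E$ on $V \times V$ is literally the assertion $\mathcal{A} = \mathcal{P}(E)$. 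The whole argument then hinges on the observation that two $\xi_1, \xi_2 \in \beta E$ have equal source and target iff they agree on every $s^{-1}(A) = E \cap (A \times V)$ and every $t^{-1}(B) = E \cap (V \times B)$, i.e.\ iff they agree on all of $\mathcal{A}$.

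For the direction $E$ small $\Rightarrow \beta G$ proper, I would take $\xi_1, \xi_2 \in \beta E$ with common source $\upsilon$ and target $\omega$, pick an arbitrary $Q \in \xi_1$, and use smallness to write $Q = \bigcup_{i=1}^n E \cap (A_i \times B_i)$. Since $\xi_1$ is an ultrafilter, some summand $E \cap (A_i \times B_i)$ lies in $\xi_1$; applying (\ref{eq:ultmorph}) forces $A_i \in \upsilon$ and $B_i \in \omega$, and the same considerations applied to $\xi_2$ put $E \cap (A_i \times B_i) \in \xi_2$, hence $Q \in \xi_2$. Symmetry then gives $\xi_1 = \xi_2$.

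The main obstacle is the converse. Given $Q \subseteq E$ with $Q \notin \mathcal{A}$, I need to produce distinct $\xi_1, \xi_2 \in \beta E$ with $Q \in \xi_1$, $E \setminus Q \in \xi_2$, and the same image under $\beta s \times \beta t$. By the reduction above, this amounts to finding an ultrafilter $\mathcal{U}$ on $\mathcal{A}$ that extends both to an ultrafilter on $\mathcal{P}(E)$ containing $Q$ and to one containing $E \setminus Q$. By the finite-intersection-property criterion these extensions exist precisely when $\mathcal{U}$ contains no $R \in \mathcal{A}$ with $R \subseteq Q$ or $R \subseteq E \setminus Q$; since $\mathcal{U}$ is ultra on $\mathcal{A}$, this is the same as $\mathcal{U}$ containing the $\mathcal{A}$-complement of every such $R$, so it suffices to establish the FIP of the family $\{E \setminus R : R \in \mathcal{A},\ R \subseteq Q \text{ or } R \subseteq E \setminus Q\}$ inside $\mathcal{A}$. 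The key point is that failure of this FIP would yield $R_1 \cup \cdots \cup R_k \cup R'_1 \cup \cdots \cup R'_m = E$ with each $R_i \subseteq Q$ and each $R'_j \subseteq E \setminus Q$; setting $S = R_1 \cup \cdots \cup R_k \in \mathcal{A}$ one gets $S \subseteq Q$ while $E \setminus S \subseteq R'_1 \cup \cdots \cup R'_m \subseteq E \setminus Q$, forcing $Q = S \in \mathcal{A}$ and contradicting the choice of $Q$. Hence $\mathcal{U}$ exists, and with it the two desired ultrafilters on $E$, so $\beta s \times \beta t$ fails to be mono and $\beta G$ is not proper.
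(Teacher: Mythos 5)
Your proof is correct, and it reaches the result by a noticeably different decomposition than the paper's. The paper fixes $\xi \in \beta E$ and argues that if $\xi$ is the \emph{unique} ultrafilter containing the sets $s^{-1}(S) \cap t^{-1}(T)$ ($S \in \beta s(\xi)$, $T \in \beta t(\xi)$), then this family must generate $\xi$ as a filter, and then a second compactness step converts ``every ultrafilter containing $A$ contains a rectangle inside $A$'' into ``$A$ is a finite union of rectangles''; both of these steps are asserted rather tersely. You instead package the rectangles into the Boolean subalgebra $\mathcal{A} \subseteq \mathcal{P}(E)$ they generate, observe that smallness of $E$ is exactly $\mathcal{A} = \mathcal{P}(E)$ and that the pair $(\beta s(\xi), \beta t(\xi))$ is exactly the trace $\xi \cap \mathcal{A}$, and then prove the hard direction contrapositively: given $Q \notin \mathcal{A}$ you run a single finite-intersection-property computation to produce one ultrafilter of $\mathcal{A}$ admitting two distinct extensions to $\mathcal{P}(E)$, one through $Q$ and one through $E \setminus Q$, i.e.\ an explicit multiple edge in $\beta G$. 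The underlying fact is the same in both treatments --- $\beta s \times \beta t$ is injective iff the subalgebra of rectangles is all of $\mathcal{P}(E)$, a Stone-duality statement --- but your version is more self-contained and fills in precisely the two compactness steps the paper glosses over, at the cost of introducing the subalgebra framework; the paper's version is shorter and stays closer to the filter-generation language used elsewhere in the text. One cosmetic remark: your opening reduction (identifying $E$ with its image under $s \times t$ once $G$ is known to be proper) silently uses that properness of $\beta G$ implies properness of $G$, which follows from the naturality square $(\beta s \times \beta t) \circ \eta(E) = (\eta(V) \times \eta(V)) \circ (s \times t)$ with $\eta$ injective; it is worth one sentence, as in the paper's appeal to $G \leqslant \beta G$.
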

\begin{proof}
Certainly since $G \leqslant \beta G$, $G$ is proper if $\beta G$ is. 
Suppose now that $G$ is proper. We regard $E$ as a relation on $V \times V$. Then $\beta G$ is proper just if $\beta s, \beta t$ are
jointly monic on $E$, i.e. every ultrafilter $\xi \in E$
is determined by its images $\beta s(\xi)$ and $\beta t(\xi)$, which are respectively
\[ \{ S \subseteq V : s^{-1}(S) \in \xi \},\qquad \{ T \subseteq V : t^{-1}(T) \in \xi \}
\]
More precisely, $\xi$ is the only ultrafilter on $E$ that contains the sets $s^{-1}(S)$, $t^{-1}(T)$. 
They must therefore generate $\xi$. So every $A \in \xi$ already contains some $s^{-1}(S) \cap t^{-1}(T)$.

For any set $A \subseteq E$, this says that no $\xi \in \beta E$ with $A \in \xi$ can avoid all sets of this form,
and therefore some finite union of them is $A$, say
\[ A = \bigcup_{i \in I}(s^{-1}(S_{i}) \cap t^{-1}(T_{i})) = 
E \cap\bigcup_{i \in I}(s \times t)^{-1}(S_{i} \times T_{i})
\]
where in the last equality $E$ is regarded as a subset of $V \times V$. 
This is precisely the statement that $E$ is a small relation.
\end{proof}
\ \\
The condition that adjacency be a ``small relation'' therefore emerges naturally as a density condition on graphs. 
We call such a graph $sparse$, and briefly note the following, which uses properties of small relations derived above. \ 
\ \\
\ \\
\emph{Counterexample} : A sparse connected graph of finite girth can be infinite. 
\ \\
\ \\
For the complete bipartite graph $G = K_{(1,\omega)}$ is easily seen to have $\rho_{G}$ a subrelation of $1 \times \omega$, 
hence is small, $1$ being finite.
\ \\
\ \\
We also note that sparseness implies the following property, which is in a sense the weakest possible density condition on graphs: 
Call $G$ \emph{weakly sparse} if it has no infinite complete subgraph.
\begin{lemma}
Sparse graphs are weakly sparse.
\end{lemma}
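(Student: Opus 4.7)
The plan is to argue by contradiction, exploiting two facts established earlier: smallness is closed under taking subrelations (Lemma 2 on the ideal property) and is absolute (the first Lemma 2 of Section 2), and the full relation $X \times Y$ is small iff one of $X$, $Y$ is finite.

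I would first suppose $G$ is sparse but fails to be weakly sparse, so there exists an infinite set $W \subseteq V$ on which $G$ induces a complete subgraph. By definition of complete subgraph, every ordered pair of vertices in $W$ is adjacent in $G$, so $W \times W \subseteq \rho_G$ (up to whatever convention one adopts about the diagonal, which is harmless since removing a subrelation from a small one leaves it small). Since $G$ is sparse, $\rho_G$ is a small relation on $V \times V$, and because small relations form an ideal, the subrelation $W \times W$ is likewise small when viewed inside $V \times V$.

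Next I would invoke the absoluteness lemma to transfer this: $W \times W$ is small as a relation on $V \times V$ iff it is small as a relation on $W \times W$. Hence the full relation $W \times W$ is small on $W \times W$. But the lemma characterizing when the full relation is small says this forces $W$ to be finite, contradicting our choice of $W$ as infinite.

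There is no serious obstacle — the entire argument is a direct chaining of three lemmas already in hand, and the only thing to be careful about is the precise meaning of ``complete subgraph'' in the categorical digraph setting. Since the smallness ideal is closed under subrelations, any reasonable convention (with or without loops on $W$, with or without reversed edges) still yields a subrelation of $\rho_G$ that contains enough of $W \times W$ to rule out finiteness of $W$ via the full-relation lemma, so the argument is robust to these conventions.
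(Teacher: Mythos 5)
Your overall strategy is the paper's: restrict the small relation $\rho_G$ to the vertex set $W$ of the putative infinite complete subgraph, use the ideal and absoluteness lemmas to see the restriction as a small relation on $W \times W$, and invoke the characterization of when the full relation is small to force $W$ finite. The gap is in how you dispose of the diagonal, and the justification you give points in the wrong direction. Under the paper's own convention (its proof explicitly sets $\rho_W = W \times W - \Delta$, where $\Delta$ is the diagonal), a complete subgraph on $W$ hands you only $W \times W - \Delta$ as a subrelation of $\rho_G$, not $W \times W$ itself. Closure under subrelations then tells you $W \times W - \Delta$ is small --- but the full-relation lemma speaks only about the \emph{full} product $X \times Y$, so it does not apply to $W \times W - \Delta$. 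To finish you must pass \emph{upward} from $W \times W - \Delta$ to $W \times W$, and ``removing a subrelation from a small relation leaves it small'' is useless for that. What is needed is that $\Delta$ is itself small --- it is the identity function on $W$, so the functions-are-small lemma applies --- together with closure of small relations under finite unions, giving $W \times W = (W \times W - \Delta) \cup \Delta$ small and hence $W$ finite. This is exactly the step the paper's proof makes explicit and yours omits; it is a one-line repair from lemmas already in hand, but as written your final appeal to the full-relation lemma does not go through in the loop-free case, which is the case that actually occurs.

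A secondary caution: your parenthetical ``with or without reversed edges'' is not as harmless as you suggest. If ``complete'' only guaranteed one of each pair $(u,v)$, $(v,u)$, you would be left needing to show that an arbitrary infinite tournament is non-small, which none of the quoted lemmas gives you. The paper's digraph conventions make all ordered pairs off the diagonal adjacent, so this complication does not in fact arise, but it is not the subrelation-closure property that saves you.
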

\begin{proof}
Suppose the sparse graph $G$ had an infinite complete subgraph $H$. Then $H$ would also be sparse, and on its vertex set $W$,
the adjacency relation $\rho_{W} = W \times W - \Delta$ would be small, where $\Delta = \{ (w,w): w \in W \}$ is the diagonal. 
But $\Delta$ is a function, and therefore small; hence $W \times W$ is small for infinite W, a contradiction.
\end{proof} 
\subsection{Loops}
We call $G = \langle s,t: E \rightarrow V \rangle$ \emph{loop-free} if $s(e) \neq t(e) \ \forall e \in E$.
\begin{theorem}
$\beta G$ is loop-free iff $G$ is finitely colourable.
\end{theorem}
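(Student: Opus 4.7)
\begin{proofsketch}
The plan is to handle the two directions separately, leaning on the chromatic-number proposition for the easy direction and using the finite intersection property to build a loop-bearing ultrafilter for the hard direction.

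For the easy direction ($G$ finitely colourable $\Rightarrow \beta G$ loop-free): given a colouring $c : V \to C$ with $|C| < \infty$, the proposition on chromatic numbers already produces a colouring $\beta c : \beta V \to \beta C = C$ of $\beta G$. Any loop $\xi \in \beta E$ at $\upsilon \in \beta V$ would force $\beta c(\beta s(\xi)) = \beta c(\beta t(\xi))$, contradicting the fact that $\beta c$ is a colouring. Hence $\beta G$ is loop-free.

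For the converse I would argue by contrapositive. Call $I \subseteq V$ \emph{independent} if it contains no pair $(s(e), t(e))$ with $e \in E$. The first step is a reformulation: by Lemma~\ref{lem:adjcrit}, $\upsilon \rho_{\beta G} \upsilon$ iff $A \rho_G B$ for all $A, B \in \upsilon$, and since $\upsilon$ is closed under finite intersection this reduces to $A \rho_G A$ for all $A \in \upsilon$, i.e.\ no $A \in \upsilon$ is independent. So I want to construct $\upsilon \in \beta V$ that avoids every independent set, equivalently $\upsilon$ containing the family
\[
	\mathcal{F} = \{ V \setminus I : I \subseteq V \text{ independent} \}.
\]
By the opening lemma on ultrafilters, such $\upsilon$ exists iff $\mathcal{F}$ has the finite intersection property.

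The final step is to show that failure of the FIP for $\mathcal{F}$ forces a finite colouring of $G$. If $(V \setminus I_1) \cap \cdots \cap (V \setminus I_n) = \emptyset$ for independent $I_1, \dots, I_n$, then $V = I_1 \cup \cdots \cup I_n$. Refining to the partition $I'_k = I_k \setminus \bigcup_{j<k} I_j$ keeps each block independent (a subset of an independent set), and $c(v) = k$ for $v \in I'_k$ defines an $n$-colouring $c : V \to \{1, \dots, n\}$. Contrapositively, if $G$ is not finitely colourable then $\mathcal{F}$ has the FIP, extends to an ultrafilter $\upsilon$ containing no independent set, and by the reformulation above $\upsilon$ carries a loop in $\beta G$.

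The main obstacle is the initial reframing: seeing that ``$\beta G$ has a loop at $\upsilon$'' is the same as ``$\upsilon$ avoids all independent sets'', and that the dual family of complements of independent sets encodes exactly the obstruction to finite colourability through the FIP. Once that translation is in place, the ultrafilter lemma and a one-line refinement of a finite cover do the rest.
\end{proofsketch}
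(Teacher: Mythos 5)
Your proposal is correct, and the hard direction takes a genuinely different route from the paper's. The paper works directly with ultrafilters on the edge set: a loop $\xi \in \beta E$ is exactly an ultrafilter avoiding every symmetric difference $s^{-1}(A)\ \Delta\ t^{-1}(A)$, so loop-freeness of $\beta G$ forces $E = \bigcup_{i=1}^{n}(s^{-1}(A_{i})\ \Delta\ t^{-1}(A_{i}))$ for finitely many $A_{i} \subseteq V$, from which it builds the colouring $c(x) = \{ i : x \in A_{i}\}$ valued in $2^{[1,n]}$. You instead localize the loop at a vertex ultrafilter: via Lemma~\ref{lem:adjcrit}, a loop at $\upsilon$ is equivalent to $\upsilon$ containing no independent set (your reduction from ``$A \rho B$ for all $A, B \in \upsilon$'' to ``$A \rho A$ for all $A \in \upsilon$'' is valid, since $A \cap B \in \upsilon$ and $\rho$ is monotone in both arguments), and the opening ultrafilter lemma converts non-existence of such $\upsilon$ into a finite cover of $V$ by independent sets, hence a finite colouring. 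Both arguments are compactness arguments driven by the finite intersection property, but yours reuses the adjacency criterion rather than manipulating $\beta E$ directly, yields an $n$-colouring where the paper gets a $2^{n}$-colouring from $n$ witnessing sets, and makes the combinatorial content explicit ($G$ is finitely colourable iff $V$ is a finite union of independent sets). Your easy direction is essentially the paper's, delegated to the chromatic-number proposition, which is legitimate since that proposition precedes the theorem.
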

\begin{proof}
$\Rightarrow$
We try to construct a loop $\xi$ on the edge-set $\beta E$ of $\beta G$. Seeking $s(\xi) = t(\xi)$, we need
\[    s^{-1}(A) \in \xi \iff t^{-1}(A) \in \xi, \qquad \forall A \subseteq V,
\]
i.e. the symmetric difference
\[    s^{-1}(A)\ \Delta \ t^{-1}(A) \notin \xi, \qquad \forall A \subseteq V.
\]
By hypothesis, no such loop $\xi$ exists, so some finite union of these symmetric differences must be the whole set:
\begin{eqnarray}
      E & = & \bigcup_{i=1}^{n}(s^{-1}(A_{i}) \  \Delta \  t^{-1}(A_{i}))  \label{eq:bigcapsym}
\end{eqnarray}
We now construct a finite colouring on $G$, $c: V \rightarrow C$ with $C$ the power set of 
$[1,n] = \{ i \in \mathbb{N}: 1 \leqslant i \leqslant n \}$ as follows:
\[    \forall x \in V,\qquad c(x) = \{ i \in [1,n] : x \in A_{i} \} \in 2^{[1,n]} = C.
\]
To show this is a colouring, consider any edge $e \in E$. By (\ref{eq:bigcapsym}), 
\[    e \in s^{-1}(A_{i}) \  \Delta \  t^{-1}(A_{i})
\]
for some $i$, and so
\[    i \in c(s(e)) \ \Delta \ c(t(e)).
\]
Hence $c(s(e)) \neq c(t(e))$.

$\Leftarrow$ 
Suppose $c: V \rightarrow C$ colours $G$, $|C| < \infty$. 
For all $i \neq j$ in $C$, define
\[    K(i,j) = \{ e \in E : c(s(e)) = i, c(t(e)) = j \} \subseteq E
\]
Then the $K(i,j)$ form a finite partition of $E$. 
For any edge $\xi \in \beta E$ of $\beta G$, the partition property implies that $K(i,j) \in \xi$ for a unique $i$ and $j$. 
Hence $s^{-1}c^{-1}(i)$ and $s^{-1}c^{-1}(j)$, whose intersection is $K(i,j)$, are both $\xi$-sets. Rewriting this in terms of
the source and target vertices $s(\xi), t(\xi) \in \beta V$, we see
\[    c^{-1}(i) \in s(\xi),\qquad c^{-1}(j) \in t(\xi).
\]
Since $c^{-1}(i)$ and $c^{-1}(j)$ are disjoint, this forces $s(\xi) \neq t(\xi)$, i.e. $\xi$ is not a loop.
\end{proof}

\section{Generalizations}
The "categorical digraph" results described here do not seem to depend heavily on the particular category $C = II$ used to model graphs.
We believe that several of them, and in particular the colouring theorem, are not really theorems of graph theory at all
and should have natural generalizations to the case where $C$ is some more general small category. 
\ \\

A starting point might be to replace $II$ by the \emph{simplicial category}
$\Delta$ consisting of all finite nonempty ordinals and order-preserving functions (see \cite{moerdijki:shegeo}). 
This points to a possible theory of colouring for simplicial complexes. 
For now, we just note that truncating $\Delta$ to dimension 2 gives a category very similar to our $II$ and $II''$.

\bibliography{biblio}

\end{document}